\documentclass{amsart}[12pt]
\parskip=12pt
\def\doctype{}

\usepackage{latexsym,amssymb,amsfonts,dsfont,bm}
\usepackage{fancyhdr,caption}
\usepackage{tikz}
\usepackage{hyperref}

\newcommand\F{\mathbb{F}}

\newcommand{\comment}[1]{}

\newcommand\sP{\mathcal{P}}
\newcommand\sM{\mathcal{M}}

\numberwithin{equation}{section}


\setlength\parindent{0pt}
\setlength{\textwidth}{6in}
\setlength{\oddsidemargin}{0.25in}
\setlength{\evensidemargin}{0.25in}
\setlength{\topmargin}{0in}
\setlength{\textheight}{8.5in}


\pagestyle{fancy}
\fancyhead[C]{}
\fancyhead[R]{}
\fancyhead[L]{}
\cfoot{\vspace{5pt} \thepage}

\fancypagestyle{titlepage}{
\fancyhead[R]{\doctype}
\fancyhead[CL]{}
\cfoot{\vspace{5pt} \thepage}
}


\let\oldsection\section
\newcommand\boldsection[1]{\oldsection{\bf #1}}
\newcommand\starsection[1]{\oldsection*{\bf #1}}
\makeatletter
\renewcommand\section{\@ifstar\starsection\boldsection}
\makeatother


\newtheoremstyle{theorem}
  {12pt}		  
  {0pt}  
  {\sl}  
  {\parindent}     
  {\bf}  
  {. }    
  { }    
  {}     
\theoremstyle{theorem}
\newtheorem{thm}{Theorem}[section]  
\newtheorem{lemma}[thm]{Lemma}     
\newtheorem{cor}[thm]{Corollary}

\newtheoremstyle{definition}
  {12pt}		  
  {0pt}  
  {}  
  {\parindent}     
  {\bf}  
  {. }    
  { }    
  {}     
\theoremstyle{definition}

\newtheorem{ex}[thm]{Example}

\renewcommand{\proofname}{Proof}

\makeatletter
\renewenvironment{proof}[1][\proofname]{\par
  \pushQED{\qed}%
  \normalfont \partopsep=\z@skip \topsep=\z@skip
  \trivlist
  \item[\hskip\labelsep
        \scshape
    #1\@addpunct{.}]\ignorespaces
}{%
  \popQED\endtrivlist\@endpefalse
}
\makeatother


\makeatletter
\renewcommand*\@maketitle{%
  \normalfont\normalsize
  \@adminfootnotes
  \@mkboth{\@nx\shortauthors}{\@nx\shorttitle}%
  \global\topskip42\p@\relax 
  \@settitle
  \ifx\@empty\authors \else {\vskip 1em
\vtop{\centering\shortauthors\@@par}} \fi
  \ifx\@empty\@date \else {\vskip 1em \vtop{\centering\@date\@@par}}\fi 
  \ifx\@empty\@dedicatory
  \else
    \baselineskip18\p@
    \vtop{\centering{\footnotesize\itshape\@dedicatory\@@par}%
      \global\dimen@i\prevdepth}\prevdepth\dimen@i
  \fi
  \@setabstract
  \normalsize
  \if@titlepage
    \newpage
  \else
    \dimen@34\p@ \advance\dimen@-\baselineskip
    \vskip\dimen@\relax
  \fi
} 
\renewcommand*\@adminfootnotes{%
  \let\@makefnmark\relax  \let\@thefnmark\relax
  \ifx\@empty\@subjclass\else \@footnotetext{\@setsubjclass}\fi
  \ifx\@empty\@keywords\else \@footnotetext{\@setkeywords}\fi
  \ifx\@empty\thankses\else \@footnotetext{%
    \def\par{\let\par\@par}\@setthanks}%
  \fi
\thispagestyle{titlepage}
}
\makeatother


\title{\large A lower bound on permutation codes\\ of distance $n-1$}

\author{Sergey Bereg}
\address{\rm Sergey Bereg:
Computer Science,
University of Texas at Dallas, Richardson, TX
}

\email{besp@utdallas.edu}
\author{Peter J.~Dukes}
\address{\rm Peter J.~ Dukes:
Mathematics and Statistics,
University of Victoria, Victoria, BC
}
\email{dukes@uvic.ca}

\thanks{Research of the first author is supported in part by NSF award CCF-1718994.
Research of the second author is supported by NSERC grant 312595--2017}

\date{\today}

\begin{document}

\begin{abstract}
A classical recursive construction for mutually orthogonal latin squares (MOLS) is shown to hold more generally for a class of permutation codes of length $n$ and minimum distance $n-1$. When such codes of length $p+1$ are included as ingredients, we obtain a general lower bound $M(n,n-1) \ge n^{1.0797}$ for large $n$, gaining a small improvement on the guarantee given from MOLS.
\end{abstract}

\maketitle
\hrule


\section{Introduction}

Let $n$ be a positive integer.  The \emph{Hamming distance} between two permutations $\sigma, \tau \in \mathcal{S}_n$ 
is the number of non-fixed points of $\sigma \tau^{-1}$, or, equivalently, the number of disagreements when $\sigma$ and $\tau$ are written as words in single-line notation.  For example, $1234$ and $3241$ are at distance three.  

A \emph{permutation code} PC$(n,d)$ is a subset $\Gamma$ of $\mathcal{S}_n$ such that the distance between 
any two distinct elements of $\Gamma$ is at least $d$.  Language of classical coding theory is often used: elements of $\Gamma$ are \emph{words}, $n$ is the \emph{length} of the code, and the parameter $d$ is the \emph{minimum distance}, although for our purposes it is not important whether distance $d$ is ever achieved.  Permutation codes are also called \emph{permutation arrays} by some authors, where the words are written as rows of a $|\Gamma| \times n$ array.

The investigation of permutation codes essentially began with the articles \cite{DV,FD}.
After a decade or so of inactivity on the topic, permutation codes enjoyed a resurgence due to 
various applications.  See \cite{CCD,H,SM} for surveys of construction methods and for more on the coding applications.

For positive integers $n \ge d$, we let $M(n,d)$ denote the maximum size of a PC$(n,d)$.  It is easy to see that $M(n,1)=M(n,2)=n!$, and that $M(n,n)=n$.  The Johnson bound $M(n,d) \le n!/(d-1)!$ holds.  The alternating group $A_n$ shows that $M(n,3)=n!/2$.  More generally, a sharply $k$-transitive subgroup of $\mathcal{S}_n$ furnishes a permutation code $PC(n,n-k+1)$ of (maximum possible) size $n!/(n-k)!$.  For instance, the Mathieu groups $M_{11}$ and $M_{12}$ are maximum PC$(11,7)$ and PC$(12,7)$, respectively.  On the other hand, determination of $M(n,d)$ in the absence of any algebraic structure appears to be a difficult problem.  As an example, it is only presently known that $78 \le M(7,5) \le 122$; see \cite{JLOS,MBS} for details.  A table of bounds on $M(n,d)$ can be found in \cite{SM}. 

In \cite{CKL}, it was shown that the existence of $r$ mutually orthogonal latin squares (MOLS) of order $n$ yields a permutation code PC$(n,n-1)$ of size $rn$.  Although construction of MOLS is challenging in general, the problem is at least well studied.  Lower bounds on MOLS can be applied to the permutation code setting, though it seems for small $n$ not a prime power that $M(n,n-1)$ can be much larger than the MOLS guarantee.  For example, $M(6,5) =18$ despite the nonexistence of orthogonal latin squares of order six, and $M(10,9) \ge 49$, \cite{JS}, when no triple of MOLS of order 10 is known.  On the other hand, it is straightforward to see, \cite{CKL}, that $M(n,n-1)=n(n-1)$ implies existence of a full set of MOLS (equivalently a projective plane) of order $n$, so any nontrivial upper bound on permutation codes would have major impact on design theory and finite geometry.  This connection is explored in more detail in \cite{BM}.  Permutation codes are used in \cite{JS2} for some recent MOLS constructions.

Let $N(n)$ denote the maximum number of MOLS of order $n$.  Chowla, Erd\H{o}s and Strauss showed in \cite{CES} that $N(n)$ tends to infinity with $n$.  Wilson, \cite{WilsonMOLS}, found a construction strong enough to prove $N(n) \ge n^{1/17}$ for sufficiently large $n$.  Subsequently, Beth, \cite{Beth} tightened some number theory in the argument to lift the exponent to $1/14.8$.  In terms of permutation codes, then, one has $M(n,n-1) \ge n^{1+1/14.8}$ for sufficiently large $n$.

Our main result in this note gives a small improvement to the exponent.

\begin{thm}
\label{main}
$M(n,n-1) \ge n^{1.0797}$ for sufficiently large $n$.
\end{thm}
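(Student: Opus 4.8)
\section*{Proof proposal}

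The plan is to mimic the proof that $N(n)\ge n^{1/14.8}$ for MOLS --- which yields $M(n,n-1)\ge n\cdot N(n)\ge n^{1.0676}$ via the construction of \cite{CKL} --- while feeding the underlying recursion a richer set of ingredients. Three tasks. (i) Identify a class $\mathcal{C}$ of permutation codes PC$(n,n-1)$ that properly contains the codes of size $rn$ obtained from $r$ MOLS of order $n$, and for which MacNeish's product and Wilson's ``break up the blocks and fill in the holes'' recursion again output members of $\mathcal{C}$. (ii) Construct, for every prime $p$, a member of $\mathcal{C}$ of length $p+1$, minimum distance $p$, and size of order $p^{2}$ --- I will aim for at least $(p+1)(p-2)$, which meets the known value $M(6,5)=18$ at $p=5$ and lies within $O(p)$ of the Johnson bound $M(p+1,p)\le(p+1)p$. (iii) Rerun the number-theoretic optimization with these length-$(p+1)$ codes available as ingredients and read off the exponent $1.0797$.

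For task (i), the point is that a PC$(n,n-1)$ of size $rn$ need not come from $r$ MOLS, yet it still carries much of the incidence structure of a net of degree $r+2$ and order $n$ --- equivalently a transversal design $\mathrm{TD}(r+2,n)$ with a distinguished parallel class --- and it is this weaker structure, rather than orthogonality of latin squares, on which the direct product and the hole-filling step of Wilson's argument actually operate. The delicate choice is to make $\mathcal{C}$ wide enough to contain the length-$(p+1)$ codes of task (ii) --- which are genuinely not $(p-2)$ MOLS of order $p+1$, since $N(p+1)$ is typically far smaller --- while keeping it closed under the two operations. I would fix $\mathcal{C}$ accordingly, verify closure by following the MOLS proofs of \cite{WilsonMOLS,Beth} essentially verbatim, and record the outcome as: if $q$ is a prime power, $\mathcal{C}$ contains codes of the required sizes at lengths $q$, $q+1$ and at the relevant ``filler'' lengths, and $n$ admits the standard decomposition in terms of $q$, then $\mathcal{C}$ contains a PC$(n,n-1)$ whose size, divided by its length, is the minimum of the corresponding ratios of the ingredients.

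The crux, and what I expect to be the main obstacle, is task (ii). Taking the $p+1$ coordinates to be the projective line $\mathrm{PG}(1,p)=\F_{p}\cup\{\infty\}$, I would look inside $\mathrm{PGL}(2,p)$ for a large set $\Gamma$ in which any two M\"obius transformations agree in at most one point, i.e.\ with $\Gamma\Gamma^{-1}$ avoiding every transformation having two fixed points on $\mathrm{PG}(1,p)$. A single coset of the non-split torus --- a cyclic subgroup of order $p+1$ all of whose non-identity elements act without fixed points --- already gives $p+1$ codewords pairwise at distance $p+1$; I would try to amalgamate on the order of $p$ such cosets, with representatives indexed by a structured subset of $\F_{p}$ so that the cross-differences are governed by a norm/trace condition over $\F_{p}$, thereby reaching size $\Theta(p^{2})$ while keeping the forbidden differences out, and then exhibit the incidence structure certifying $\Gamma\in\mathcal{C}$. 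The amalgamation estimate and this membership check are where the real work lies; if one could only produce a code of size $\Theta(p^{1+\varepsilon})$ for small $\varepsilon>0$, the method would still beat $1/14.8$, but by less.

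Task (iii) is then a variant of the sieve computation behind \cite{WilsonMOLS,Beth}. The structural gain is that, with (ii) in hand, the recursion has for \emph{every} prime $p$ an ingredient of size about $p$ times its length, simultaneously at $p$ and at $p+1$; Beth instead needs $q$ and $q+1$ to be close to prime powers at the same time, a far sparser event. I would re-optimize accordingly: use a prime-gap bound of the form ``$[x,x+x^{0.525}]$ contains a prime'' to choose a prime $p$ of size $n^{\theta}$ as large as the relaxed constraints permit, force $n$ into the standard decomposition with every auxiliary length admissible and again of size about $p$ times its length (handling the auxiliary lengths by a further pass of the recursion, or by an appropriate decomposition into primes), and conclude $M(n,n-1)\ge n\cdot p\ge n^{1+\theta}$ with $\theta\ge 0.0797$. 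Tracking how much the relaxed constraints improve Beth's $1/14.8$ --- to roughly $1/12.5$ --- is the remaining bookkeeping, and that is what pins down the stated constant.
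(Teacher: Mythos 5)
Your overall architecture (a class of PC$(n,n-1)$'s closed under a MacNeish-type product and a Wilson-style recursion, new ingredient codes at lengths adjacent to prime powers, then a Buchstab-type sieve to decompose $n$) does parallel the paper. But the proof hinges on your task (ii), and that step is a genuine gap, not just ``real work.'' You require, for \emph{every} prime $p$, a code of length $p+1$ and distance $p$ of size $\Theta(p^2)$ (you aim for $(p+1)(p-2)$). Nothing close to this is known: a PC$(p+1,p)$ of size $(p+1)p$ is equivalent to a sharply $2$-transitive set of permutations on $p+1$ points, i.e.\ to a projective plane of order $p+1$, and sizes within $O(p)$ of that for all primes $p$ would be a breakthrough far beyond the theorem you are trying to prove. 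Your proposed route inside $\mathrm{PGL}(2,p)$ makes the difficulty visible: amalgamating $\Theta(p)$ cosets of the non-split torus $T$ requires every mixed coset $g_iTg_j^{-1}$ (of size $p+1$) to avoid all elements with two fixed points, which constitute about half the group; there is no known construction producing even a few such pairwise-compatible cosets for general $p$. Indeed the paper itself records that at length $q+1$ only $r=O(\sqrt{q})$ is achievable with current techniques (via \cite{BMS}), and its actual new ingredient is different and much weaker than what you posit: a $(q-1)$-IPC$(q^2+1,q^2)$ built by the partition-and-extension method of \cite{BMS,B17} on $\mathrm{AGL}(1,q^2)$, i.e.\ regularity only about the \emph{square root} of the length. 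The sieve is then run with $m=q^2$, so that both fillers $m$ and $m+1$ admit $r$-regular idempotent ingredients with $r\approx\sqrt{m}$.

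This also explains why your task (iii) bookkeeping does not cohere. If codes of size $\Theta(p^2)$ at lengths $p$ and $p+1$ really existed for every prime $p$, the decomposition constraint would be roughly $\gamma(\beta+2)<1$ with $\beta=4.2665$, giving an exponent near $0.16$, not $0.0797$; conversely, the constant $0.0797$ arises exactly from the paper's weaker ingredient, via $m=q^2\approx n^{2\gamma}$ and the constraint $2\gamma(\beta+2)<1$. So the number you quote is evidence that the available ingredient has $r\approx(\text{length})^{1/2}$, contradicting the $\Theta(p^2)$ codes your argument needs. Finally, your task (i) is left unspecified; it is fixable, but the paper's concrete solution is worth noting: the closure property is carried by \emph{$r$-regular idempotent} codes (every symbol the unique fixed point of exactly $r$ words), and the recursion is a PBD construction (blocks filled with $r$-IPC's glued at common points), together with an explicit PBD$(mt+u,\{m,m+1,t,u\})$ built from $N(t)\ge m-1$ as in \cite{CES,WilsonMOLS}. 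Without a correct substitute for task (ii) at the strength you assume, or a retreat to square-root-regularity ingredients as in the paper, your argument does not reach $n^{1.0797}$.
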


The proof is essentially constructive, although it requires, as does \cite{Beth,WilsonMOLS}, the selection of a `small' integer avoiding several arithmetic progressions.  This is guaranteed by the Buchstab sieve; see \cite{Ivt}.  Apart from this number theory, our construction method generalizes a standard design-theoretic construction for MOLS to permutation codes possessing a small amount of additional structure.  Some set up for our methodology is given in the next two sections, and the proof of Theorem~\ref{main} is given in Section~\ref{proof} as a consequence of the somewhat stronger Theorem~\ref{idem-bound}.  We conclude with a discussion of some possible next directions for this work.

\section{Idempotent permutation codes and latin squares}

Let $[n]:=\{1,2,\dots,n\}$.
A \emph{fixed point} of a permutation $\pi:[n]\rightarrow [n]$ is an element $i \in [n]$ such that $\pi(i)=i$.  In single-line notation, this says symbol $i$ is in position $i$. Of course, for the identity permutation $\iota$, every element is a fixed point.

A latin square $L$ of order $n$ is \emph{idempotent} if the $(i,i)$-entry of $L$ equals $i$ for each $i \in [n]$.  Extending this definition, let us say that a permutation code is \emph{idempotent} if each of its words has exactly one fixed point.  So, a maximum PC$(n,n)$ is idempotent if and only if the `corresponding' latin square is idempotent.  

We are particularly interested in idempotent PC$(n,n-1)$ in which every symbol is a fixed point of the same number, say $r$, of words; these we call $r$-\emph{regular} and denote by $r$-IPC$(n,n-1)$.  Permutation codes with extra `distributional' properties have been investigated before.  For example, `$k$-uniform' permutation arrays are introduced in \cite{DV}, while `$r$-balanced' and `$r$-separable' permutation arrays are considered in \cite{DFKW}.  However, our definition is seemingly new, or at least not obviously related to these other conditions.

If there exists an IPC$(n,n-1)$, say $\Delta$, then $\Delta \cup \{\iota\}$ is also a PC$(n,n-1)$.  Consequently, 
$M(n,n-1) \ge rn+1$ when there exists an $r$-IPC$(n,n-1)$.  It follows that $r \le n-2$ is an upper bound on $r$.

On the other hand, if $\Gamma$ is a PC$(n,n-1)$ containing $\iota$, then the words of $\Delta$ at distance exactly $n-1$ from $\iota$ form an idempotent IPC$(n,n-1)$.  Concerning the $r$-regular condition, whether $\iota \in \Gamma$ or not, we may find an $r$-IPC$(n,n-1)$ with
\begin{equation}
\label{r-formula}
r=\max_{\sigma \in \Gamma} \min_{i \in [n]} |\{\tau \in \Gamma \setminus \{\sigma\}: \tau(i)=\sigma(i)\}|.
\end{equation}
In more detail, if $\sigma$ achieves the maximum in (\ref{r-formula}), then for each $i=1,\dots,n$ we choose exactly $r$ elements $\tau \in \Gamma$ which agree with $\sigma$ in position $i$.  After relabelling each occurrence of $\sigma(i)$ to $i$, we have the desired $r$-idempotent PC$(n,n-1)$.

A question in its own right is whether there exists an $r$-IPC$(n,n-1)$ for $r = \lfloor \frac{1}{n}(M(n,n-1)-1) \rfloor$.
However, relatively little is known about maximum permutation code sizes.  Indeed, the exact value of $M(n,n-1)$ is known only for $n=q$, a prime power, ($M(q,q-1) = q(q-1)$, \cite{FD}) and for $n=6$ ($M(6,5)=18$, \cite{K6}).

\begin{ex}
\label{ipc6}
A 2-IPC$(6,5)$:
$$\begin{array}{cc}
1\  3\  5\  6\  2\  4\ &   1\  4\  6\  2\  3\  5 \\
6\  2\  4\  5\  3\  1\ &   5\  2\  1\  3\  6\  4\\
5\  6\  3\  1\  4\  2\ &   4\  5\  3\  2\  6\  1\\
2\  5\  6\  4\  1\  3\ &   3\  6\  1\  4\  2\  5\\
3\  1\  4\  6\  5\  2\ &   6\  4\  2\  1\  5\  3\\
4\  3\  2\  5\  1\  6\  &  2\  1\  5\  3\  4\  6\\
\end{array}$$
\end{ex}

\begin{ex}
A 3-IPC$(10,9)$ (symbol `0' is used for `10'):
$$\begin{array}{ccc}
\ 1\ 8\ 6\ 2\ 9\ 5\ 4\ 0\ 3\ 7& \ 1\ 5\ 0\ 9\ 7\ 4\ 3\ 6\ 2\ 8& \ 1\ 3\ 8\ 0\ 6\ 9\ 5\ 2\ 7\ 4\\
\ 8\ 2\ 1\ 9\ 6\ 7\ 0\ 4\ 5\ 3& \ 3\ 2\ 5\ 6\ 9\ 8\ 1\ 7\ 0\ 4& \ 9\ 2\ 8\ 7\ 4\ 1\ 3\ 0\ 6\ 5\\
\ 5\ 9\ 3\ 2\ 7\ 8\ 0\ 1\ 4\ 6& \ 8\ 7\ 3\ 5\ 2\ 0\ 4\ 9\ 6\ 1& \ 9\ 4\ 3\ 1\ 6\ 2\ 8\ 5\ 0\ 7\\
\ 9\ 7\ 1\ 4\ 0\ 3\ 5\ 6\ 8\ 2& \ 6\ 3\ 2\ 4\ 1\ 0\ 9\ 7\ 5\ 8& \ 0\ 8\ 9\ 4\ 3\ 1\ 2\ 5\ 7\ 6\\
\ 0\ 3\ 6\ 7\ 5\ 2\ 1\ 4\ 8\ 9& \ 3\ 8\ 4\ 0\ 5\ 7\ 9\ 1\ 6\ 2& \ 7\ 9\ 2\ 8\ 5\ 4\ 6\ 3\ 0\ 1\\
\ 9\ 8\ 7\ 5\ 1\ 6\ 0\ 3\ 2\ 4& \ 8\ 9\ 4\ 1\ 0\ 6\ 2\ 7\ 3\ 5& \ 5\ 1\ 0\ 3\ 9\ 6\ 8\ 4\ 7\ 2\\
\ 3\ 0\ 9\ 8\ 1\ 2\ 7\ 6\ 4\ 5& \ 5\ 4\ 6\ 0\ 8\ 1\ 7\ 9\ 2\ 3& \ 8\ 6\ 0\ 2\ 4\ 3\ 7\ 5\ 1\ 9\\
\ 2\ 6\ 7\ 1\ 9\ 0\ 5\ 8\ 4\ 3& \ 4\ 1\ 9\ 0\ 2\ 3\ 6\ 8\ 5\ 7& \ 7\ 0\ 1\ 3\ 4\ 5\ 9\ 8\ 2\ 6\\
\ 0\ 7\ 4\ 6\ 1\ 5\ 8\ 2\ 9\ 3& \ 2\ 1\ 6\ 8\ 0\ 7\ 3\ 5\ 9\ 4& \ 4\ 5\ 7\ 3\ 6\ 8\ 2\ 0\ 9\ 1\\
\ 6\ 5\ 1\ 7\ 2\ 9\ 8\ 3\ 4\ 0& \ 4\ 6\ 5\ 8\ 7\ 1\ 9\ 2\ 3\ 0& \ 2\ 4\ 8\ 9\ 3\ 5\ 6\ 7\ 1\ 0\\
\end{array}$$
\end{ex}

The connection with MOLS is important in the sequel.  The following result is essentially the construction from MOLS to PC$(n,n-1)$ in \cite{CKL}, except that here we track the idempotent condition.

\begin{thm}
\label{idemp-MOLS}
If there exist $r$ mutually orthogonal idempotent latin squares of order $n$, then there exists an $r$-IPC$(n,n-1)$.
\end{thm}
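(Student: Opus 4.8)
The plan is to mimic the standard MOLS-to-permutation-code construction from \cite{CKL}, but to keep careful track of fixed points so that the resulting code turns out idempotent and $r$-regular. Suppose $L_1, \dots, L_r$ are mutually orthogonal idempotent latin squares of order $n$ on symbol set $[n]$. Recall the classical recipe: for each $k \in \{1, \dots, r\}$ and each row index $i \in [n]$, define a permutation $\pi_{k,i} \in \mathcal{S}_n$ by reading off row $i$ of $L_k$, i.e.\ $\pi_{k,i}(j) = L_k(i,j)$ for $j \in [n]$. I would take $\Delta := \{\pi_{k,i} : 1 \le k \le r,\ 1 \le i \le n\}$ as the candidate $r$-IPC$(n,n-1)$, and then verify three things: that these $rn$ permutations are distinct, that any two of them are at distance at least $n-1$, and that the idempotent and $r$-regular conditions hold.

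The first two points are the heart of the \cite{CKL} argument and I would reproduce them as follows. Distinctness: two words $\pi_{k,i}$ and $\pi_{k',i'}$ that agree in every position would give identical rows, forcing $i = i'$ within one square, and if $k \ne k'$ the orthogonality of $L_k, L_{k'}$ is contradicted once we also use that each is a latin square (two distinct MOLS cannot share a full row). Minimum distance: if $\pi_{k,i}$ and $\pi_{k',i'}$ agreed in two distinct positions $j_1 \ne j_2$, then in the case $k = k'$ we would have row $i$ and row $i'$ of $L_k$ sharing entries in columns $j_1$ and $j_2$, impossible in a latin square unless $i = i'$; in the case $k \ne k'$, the two agreements would produce a repeated ordered pair $(L_k(i,j), L_{k'}(i,j))$ for $j \in \{j_1, j_2\}$ when $i = i'$, or more generally contradict orthogonality after tracking which cells are involved — so distinct words agree in at most one position, i.e.\ the distance is at least $n-1$.

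The genuinely new bookkeeping — and the step I expect to be the only mild obstacle — is extracting the idempotent and regularity properties from the hypothesis that each $L_k$ is \emph{idempotent}. The fixed points of $\pi_{k,i}$ are exactly the positions $j$ with $L_k(i,j) = j$; since $L_k$ is a latin square, column $j$ contains the symbol $j$ exactly once, and idempotence of $L_k$ places that occurrence in row $j$. Hence $\pi_{k,i}(j) = j$ forces $i = j$, so $\pi_{k,i}$ has the single fixed point $i$ (and indeed $\pi_{k,i}(i) = L_k(i,i) = i$, so it has at least one), confirming $\Delta$ is idempotent. For $r$-regularity, fix a symbol $i \in [n]$: the words of $\Delta$ having $i$ as their fixed point are precisely $\pi_{1,i}, \dots, \pi_{r,i}$, one from each square, so every symbol is the fixed point of exactly $r$ words. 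This gives the desired $r$-IPC$(n,n-1)$ and completes the proof.
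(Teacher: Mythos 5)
Your construction is not the paper's: the paper reads off, for each symbol $i$ and each square $L_j$, the permutation of \emph{positions of the symbol} $i$ (that is, $\pi_{i,j}(x)=y$ iff the $(x,y)$-entry of $L_j$ is $i$), whereas you read off the \emph{rows} of the squares, $\pi_{k,i}(j)=L_k(i,j)$. This difference matters, and it is exactly where your argument has a genuine gap. In the crucial case of two words coming from different squares $L_k\neq L_{k'}$ and different rows $i\neq i'$, an agreement at column $j$ means $L_k(i,j)=L_{k'}(i',j)$, which involves the cell $(i,j)$ of one square and the \emph{different} cell $(i',j)$ of the other. Orthogonality is a statement about the superposition cell by cell (each ordered pair of symbols occurs in exactly one common cell), so two such agreements do not produce a repeated ordered pair in the superposition and yield no contradiction; the phrase ``contradict orthogonality after tracking which cells are involved'' is doing all the work and there is no argument behind it. Even your subcase $i=i'$, $k\neq k'$ is misstated: two agreements there give the pairs $(a,a)$ and $(b,b)$ with $a\neq b$, which are \emph{distinct} ordered pairs, so again orthogonality alone is not violated; that subcase can only be rescued by invoking idempotence (all pairs $(a,a)$ already occur on the diagonal, so off-diagonal cells never carry equal symbols). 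For the remaining case $i\neq i'$, $k\neq k'$ I see no way to derive the at-most-one-agreement property from orthogonality plus idempotence; it does hold for the linear squares over finite fields, but your proof does not establish it in general, and the average-count argument (each row of $L_k$ meets the rows of $L_{k'}$ in $n$ agreements in total) only shows the agreements average to one, not that none exceeds one.

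The fix is to use the paper's choice of permutations. With $\pi_{i,j}(x)=y$ defined by ``cell $(x,y)$ of $L_j$ contains $i$,'' an agreement at position $x$ between $\pi_{i,j}$ and $\pi_{h,k}$ means the \emph{single} cell $(x,\pi_{i,j}(x))$ carries the ordered pair $(i,h)$ in the superposition of $L_j$ and $L_k$, so orthogonality gives exactly one agreement when $j\neq k$, and the latin property gives none when $j=k$, $i\neq h$. Idempotence of $L_j$ then gives $\pi_{i,j}(i)=i$ and forbids any other fixed point (a fixed point $x$ would force $L_j(x,x)=i$ with $x \neq i$), so each symbol $i$ is the unique fixed point of exactly the $r$ words $\pi_{i,1},\dots,\pi_{i,r}$, which is the $r$-regular idempotent condition. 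Your bookkeeping of fixed points and regularity is fine and would transfer verbatim to this corrected construction; it is only the distance verification for your row-based words that fails.
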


\begin{proof}
Suppose $L_1,\dots,L_r$ are the hypothesized latin squares, each on the set of symbols $[n]$.  For each $i \in [n]$ and $j \in [r]$, define the permutation $\pi_{i,j} \in \mathcal{S}_n$ by $\pi_{i,j}(x) = y$ if and only if the $(x,y)$-entry of $L_j$ is $i$.  Let $\Gamma = \{\pi_{i,j} : i \in [n], j \in [r]\}$. Consider distinct permutations $\pi_{i,j}$ and $\pi_{h,k}$ in $\Gamma$.  They have no agreements if $j=k$, by the latin property, and they have exactly one agreement if $j \neq k$ by the orthogonality of squares $L_j$ and $L_k$.  So $\Gamma$ is a PC$(n,n-1)$.  Moreover, since each $L_j$ is an idempotent latin square, the permutation $\pi_{i,j}$ has only the fixed point $i$.  It follows that $\Gamma$ is in fact an $r$-IPC$(n,n-1)$.
\end{proof}

We remark that the maximum number of mutually orthogonal idempotent latin squares of order $n$ is either $N(n)$ or $N(n)-1$, since we may permute rows and columns of one square so that its main diagonal is a constant, and then permute symbols of the other squares.  That is, our idempotent condition is negligible as far as the rate of growth of $r$ in terms of $n$ is concerned.

\begin{cor}
\label{prime-powers}
For prime powers $q$, there exists a $(q-2)$-IPC$(q,q-1)$.
\end{cor}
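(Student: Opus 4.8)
The plan is to apply Theorem~\ref{idemp-MOLS}, which reduces the task to constructing $q-2$ mutually orthogonal idempotent latin squares of order $q$. I would work over the field $\F_q$, fixing once and for all an identification of the symbol set $[q]$ with $\F_q$.

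For each $a \in \F_q \setminus \{0,1\}$, define a square $L_a$ on $\F_q$ by $L_a(x,y) = ax + (1-a)y$. First I would verify that each $L_a$ is a latin square: for a fixed row $x$ the map $y \mapsto ax + (1-a)y$ is a bijection since $1-a \ne 0$, and for a fixed column $y$ the map $x \mapsto ax + (1-a)y$ is a bijection since $a \ne 0$. Idempotency is immediate, since $L_a(i,i) = ai + (1-a)i = i$ for every $i \in \F_q$. For orthogonality of $L_a$ and $L_b$ with $a \ne b$, I would check that the map $(x,y) \mapsto (L_a(x,y), L_b(x,y))$ is a bijection of $\F_q^2$: it is linear, and its coefficient matrix has determinant $a(1-b) - b(1-a) = a-b \ne 0$, so it is invertible. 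Hence $\{L_a : a \in \F_q \setminus \{0,1\}\}$ is a family of $q-2$ mutually orthogonal idempotent latin squares of order $q$, and Theorem~\ref{idemp-MOLS} produces the desired $(q-2)$-IPC$(q,q-1)$. The cases $q=2$, where the assertion is vacuous, and $q=3$ require no special attention.

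There is no real obstacle here; the only subtlety worth flagging is that we must exclude $a=1$ as well as $a=0$ — the former being exactly what keeps $L_a$ latin in the column direction — which is why only $q-2$ squares are obtained rather than the $q-1$ coming from the usual non-idempotent family $L_a(x,y) = ax+y$. This is consistent with the remark preceding the corollary that the idempotent restriction costs at most one square.
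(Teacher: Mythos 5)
Your proof is correct and follows essentially the route the paper intends: the corollary is obtained by feeding $q-2$ mutually orthogonal idempotent latin squares of order $q$ into Theorem~\ref{idemp-MOLS}. The only (minor) difference is that you construct these squares explicitly via $L_a(x,y)=ax+(1-a)y$ over $\F_q$, with all verifications done correctly, whereas the paper gets them implicitly from the remark preceding the corollary, namely that the number of idempotent MOLS of order $n$ is at least $N(n)-1$, together with $N(q)=q-1$; your version is self-contained and equally valid.
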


MacNeish's bound for MOLS is an application of the standard product construction for MOLS with prime-power ingredients.

\begin{thm}[MacNeish's bound; see \cite{CES,CD,WilsonMOLS}]
\label{macneish}
If $n=q_1\dots q_t$ is factored as a product of powers of distinct
primes, then $N(n) \ge q-1$, where $q= \min\{q_i: i =1,\dots,t\}$.
\end{thm}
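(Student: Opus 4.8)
The plan is to reduce MacNeish's bound to two standard facts together with an induction. The first fact is that a prime power $q$ admits $N(q) \ge q-1$ mutually orthogonal latin squares: concretely, the squares $L_a$ on symbol set $\F_q$ defined by $L_a(x,y) = ax+y$ for $a \in \F_q \setminus \{0\}$ are easily checked to be mutually orthogonal, and this is recorded in \cite{CD}. The second fact is a product (Kronecker) construction: if there are $r$ MOLS of order $m$ and $r$ MOLS of order $n$, then there are $r$ MOLS of order $mn$. Granting these, I would argue by induction on the number $t$ of prime-power factors of $n$. When $t = 1$ there is nothing to prove beyond the first fact. For $t \ge 2$, write $n = q_1 m$ with $m = q_2 \cdots q_t$; each of $q_1, \dots, q_t$ is a prime power that is at least $q = \min_i q_i$, so (discarding squares from a larger set if needed) there are $q-1$ MOLS of order $q_1$, and by the inductive hypothesis applied to $m$ — whose prime-power factors are likewise all $\ge q$ — there are $q-1$ MOLS of order $m$. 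The product construction then yields $q-1$ MOLS of order $q_1 m = n$, giving $N(n) \ge q-1$.

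The core of the argument is the product construction, which I would set up explicitly. Given a latin square $A$ of order $m$ on symbol set $[m]$, with rows and columns indexed by $[m]$, and a latin square $B$ of order $n$ on symbol set $[n]$ with rows and columns indexed by $[n]$, define a square $A \otimes B$ on symbol set $[m] \times [n]$, with rows and columns indexed by $[m] \times [n]$, by
\[
(A \otimes B)\big((x_1,x_2),(y_1,y_2)\big) = \big(A(x_1,y_1),\, B(x_2,y_2)\big).
\]
That $A \otimes B$ is a latin square is immediate: for a fixed row $(x_1,x_2)$, the map $(y_1,y_2) \mapsto (A(x_1,y_1), B(x_2,y_2))$ is a bijection of $[m]\times[n]$ because its two coordinate maps are bijections by the latin property of $A$ and of $B$, and symmetrically for columns. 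Now take $r$ MOLS $A_1, \dots, A_r$ of order $m$ and $r$ MOLS $B_1, \dots, B_r$ of order $n$; I claim the squares $A_1 \otimes B_1, \dots, A_r \otimes B_r$ are mutually orthogonal. Fix $j \ne k$. Orthogonality of $A_j \otimes B_j$ and $A_k \otimes B_k$ amounts to the map sending a cell $\big((x_1,x_2),(y_1,y_2)\big)$ to the ordered pair of its entries in the two squares being a bijection onto $([m]\times[n])^2$. After permuting coordinates, that ordered pair is determined by, and determines, the pair consisting of $\big(A_j(x_1,y_1), A_k(x_1,y_1)\big) \in [m]^2$ and $\big(B_j(x_2,y_2), B_k(x_2,y_2)\big) \in [n]^2$. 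But $(x_1,y_1) \mapsto \big(A_j(x_1,y_1), A_k(x_1,y_1)\big)$ is a bijection $[m]^2 \to [m]^2$ by orthogonality of $A_j$ and $A_k$, and $(x_2,y_2) \mapsto \big(B_j(x_2,y_2), B_k(x_2,y_2)\big)$ is a bijection $[n]^2 \to [n]^2$ by orthogonality of $B_j$ and $B_k$, so the combined map is a bijection, as required.

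I do not anticipate a genuine obstacle here; the only point requiring care is the coordinate bookkeeping in the orthogonality verification — keeping the two tensor factors separate and reading off that orthogonality of the tensor products follows factorwise from a short counting argument. The prime-power input $N(q) \ge q-1$ is entirely standard (it follows from the field construction sketched above; see also \cite{CD}), and with it the induction on $t$ assembles into a proof of the stated bound.
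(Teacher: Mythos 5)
Your proof is correct. Note that the paper does not prove this theorem at all --- it is quoted as a known result (MacNeish's bound) with citations to \cite{CES,CD,WilsonMOLS} --- and your argument is precisely the classical one from those sources: the field construction giving $q-1$ MOLS of prime-power order, the Kronecker product construction preserving the number of MOLS, and induction on the number of prime-power factors. The coordinate bookkeeping in your orthogonality verification is handled correctly, so there is nothing to repair.
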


From Corollary~\ref{prime-powers}, we immediately have a similar result for idempotent permutation codes.

\begin{thm} 
\label{product}
If $n=q_1\dots q_t$ is factored as a product of powers of distinct
primes, then there exists a $(q-2)$-IPC$(n,n-1)$ where  
$q=\min\{q_i : i=1,\dots,t\}$.
\end{thm}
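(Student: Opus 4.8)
The plan is to pass to the latin-square picture and prove an idempotent refinement of MacNeish's bound. By Theorem~\ref{idemp-MOLS}, it suffices to construct $q-2$ mutually orthogonal \emph{idempotent} latin squares of order $n$: the associated $(q-2)$-IPC$(n,n-1)$ then comes for free. So the entire task reduces to showing that the standard product construction behind Theorem~\ref{macneish} can be run with idempotent ingredients and that it preserves idempotency, for the specific count $q-2$.

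First I would supply the prime-power ingredients. For a prime power $q_i$, work over $\mathbb{F}_{q_i}$ and, for each scalar $\alpha \in \mathbb{F}_{q_i}\setminus\{0,1\}$, take the latin square whose $(x,y)$-entry is $\alpha x + (1-\alpha)y$. These squares are latin (both $\alpha$ and $1-\alpha$ are nonzero), pairwise orthogonal (two distinct choices of $\alpha$ give coefficient matrix of determinant $\alpha-\alpha'\neq 0$), and idempotent (the $(x,x)$-entry is $\alpha x+(1-\alpha)x=x$). There are $q_i-2\ge q-2$ of them, so after discarding some we obtain exactly $q-2$ mutually orthogonal idempotent latin squares of order $q_i$; this is essentially the content of Corollary~\ref{prime-powers}.

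Next I would iterate the Kronecker-type product construction. Given mutually orthogonal idempotent latin squares $A_1,\dots,A_{q-2}$ of order $m$ and $B_1,\dots,B_{q-2}$ of order $\ell$, define squares of order $m\ell$ on $[m]\times[\ell]$ by taking the $((i_1,i_2),(j_1,j_2))$-entry of the $k$-th product square to be $(A_k[i_1,j_1],\,B_k[i_2,j_2])$. It is classical that these $q-2$ squares are again mutually orthogonal latin squares; the only new point is that they remain idempotent, since the diagonal entry at $((i_1,i_2),(i_1,i_2))$ equals $(A_k[i_1,i_1],B_k[i_2,i_2])=(i_1,i_2)$ by idempotency of $A_k$ and $B_k$. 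Applying this over the factors $q_1,\dots,q_t$ yields $q-2$ mutually orthogonal idempotent latin squares of order $n=q_1\cdots q_t$, and Theorem~\ref{idemp-MOLS} then produces the desired $(q-2)$-IPC$(n,n-1)$.

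There is no real obstacle here beyond bookkeeping; the one thing I would be careful to state explicitly is that the product construction simultaneously preserves orthogonality and idempotency, which is why I want to keep the diagonal computation in the argument. As an even shorter alternative one can avoid the explicit ingredients entirely: MacNeish's bound gives $N(n)\ge q-1$, and by the remark following Theorem~\ref{idemp-MOLS} at most one square is lost in passing to idempotent squares, so there are at least $q-2$ mutually orthogonal idempotent latin squares of order $n$; the only subtlety in that route is that relabelling the symbols of a single latin square preserves orthogonality with every other square, so the upgraded squares stay mutually orthogonal.
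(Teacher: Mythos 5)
Your argument is correct and is exactly the route the paper intends: the paper states this result as immediate from Corollary~\ref{prime-powers}, i.e.\ from $q_i-2$ idempotent MOLS of each prime-power order, the MacNeish-type product construction (which preserves both orthogonality and idempotency), and Theorem~\ref{idemp-MOLS} — or equivalently the remark that at most one square is lost in making $N(n)\ge q-1$ MOLS idempotent. Your write-up simply supplies the details the paper leaves implicit, so there is nothing to correct.
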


Finally, it is worth briefly considering a `reverse' of the MOLS construction for PC$(n,n-1)$.  Suppose a PC$(n,n-1)$, say $\Gamma$, is partitioned into PC$(n,n)$, say $\Gamma_1,\dots,\Gamma_r$. We define $r$ partial latin squares as linear combinations of permutation matrices for $\Gamma_i$ with symbolic coefficients.  Since two distinct words of the code have at most one agreement, overlaying any two of the $r$ partial latin squares leads to distinct ordered pairs of symbols over the common non-blank cells. 
We merely offer an example, but remark that this viewpoint is helpful for our recursive construction to follow.

\begin{ex}
The $2$-IPC$(6,5)$ of Example~\ref{ipc6} admits a partition into three disjoint PC$(6,6)$; this can be seen by reading the array four rows at a time.  Each of these 
sub-arrays is converted into a partial latin square of order six, where a permutation $\pi$ having fixed point $i$ fills all cells of the form $(x,\pi(x))$ in its square with symbol $i$.  
$$
\begin{array}{|c|c|c|c|c|c|}
\hline
1&4&&&3&2\\
\hline
&2&1&&4&3\\
\hline
&&3&2&1&4\\
\hline
3&&&4&2&1\\
\hline
4&1&2&3&&\\
\hline
2&3&4&1&&\\
\hline
\end{array}
\hspace{1cm}
\begin{array}{|c|c|c|c|c|c|}
\hline
1&&5&6&2&\\
\hline
5&2&6&1&&\\
\hline
2&6&&5&&1\\
\hline
&1&2&&6&5\\
\hline
6&&1&&5&2\\
\hline
&5&&2&1&6\\
\hline
\end{array}
\hspace{1cm}
\begin{array}{|c|c|c|c|c|c|}
\hline
&6&4&3&&5\\
\hline
6&&&5&3&4\\
\hline
4&5&3&&6&\\
\hline
5&3&6&4&&\\
\hline
&4&&6&5&3\\
\hline
3&&5&&4&6\\
\hline
\end{array}
$$
\end{ex}

\section{A recursive construction using block designs}

In this section, we observe that idempotent permutation codes can be combined to produce larger such codes.  Since the resultant code must preserve at most one agreement between different words, we are naturally led to consider block designs to align the ingredient codes.

A \emph{pairwise balanced design} PBD$(n,K)$ is a pair $(V,\mathcal{B})$, where $V$ is a set of size $n$, $\mathcal{B}$ is a family of subsets of $V$ with sizes in $K$, and such that every pair of distinct elements of $V$ belongs to exactly one set in $\mathcal{B}$.  The sets in $\mathcal{B}$ are called \emph{blocks}.  Thinking of a PBD as a special type of hypergraph, we refer the elements of $V$ as \emph{vertices} or \emph{points}.

The following construction is inspired from a similar one for MOLS; see \cite[Theorem 3.1]{CD}.

\begin{thm}
\label{pbd-construction}
If there exists a PBD$(n,K)$ and, for every $k \in K$, there exists an $r$-IPC$(k,k-1)$, then there exists an $r$-IPC$(n,n-1)$.
\end{thm}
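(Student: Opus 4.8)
The plan is to build the desired code by ``filling in'' the blocks of the PBD with the ingredient codes, in direct analogy with the Wilson-type construction for MOLS in \cite[Theorem 3.1]{CD}. Let $(V,\mathcal{B})$ be the given PBD$(n,K)$. For each block $B\in\mathcal{B}$ fix an $r$-IPC$(|B|,|B|-1)$, which after relabelling symbols I regard as a set $\Delta_B$ of $r|B|$ permutations of $B$; for each $i\in B$, label the $r$ words of $\Delta_B$ whose unique fixed point is $i$ as $\pi^1_{B,i},\dots,\pi^r_{B,i}$. The first step is to record the two properties of the PBD I will use: every pair of points lies in a unique block, so two distinct blocks meet in at most one point; and the blocks through a fixed point $i$ partition $V\setminus\{i\}$. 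Note also that any permutation of $B$ fixing $i$ restricts to a bijection of $B\setminus\{i\}$.

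Next I would define the code $\Gamma$. For $i\in V$ and $\ell\in[r]$, define $\pi_{i,\ell}$ acting on $V$ by $\pi_{i,\ell}(i)=i$ and, for $j\neq i$, $\pi_{i,\ell}(j)=\pi^{\ell}_{B,i}(j)$, where $B$ is the unique block containing $\{i,j\}$. This is well defined and a bijection of $V$: the restrictions of the $\pi^{\ell}_{B,i}$ to the sets $B\setminus\{i\}$ are bijections of these sets, and the sets $B\setminus\{i\}$ (over blocks $B\ni i$) partition $V\setminus\{i\}$. Each $\pi_{i,\ell}$ fixes $i$ and nothing else, since a fixed point $j\neq i$ of $\pi_{i,\ell}$ would be a second fixed point of $\pi^{\ell}_{B,i}$. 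Put $\Gamma=\{\pi_{i,\ell}: i\in V,\ \ell\in[r]\}$. The words with a prescribed fixed point $i$ are pairwise distinct (they restrict to distinct words of $\Delta_B$ on any block $B\ni i$), so $|\Gamma|=rn$, and $\Gamma$ is idempotent and $r$-regular by construction.

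The substance is verifying the minimum distance: any two distinct words of $\Gamma$ agree in at most one position. I would split into two cases. First, for $\pi_{i,\ell}$ and $\pi_{i,\ell'}$ with $\ell\neq\ell'$: an agreement at a position $j\neq i$ would force $\pi^{\ell}_{B,i}$ and $\pi^{\ell'}_{B,i}$, with $B$ the block on $\{i,j\}$, to agree at both $j$ and $i$, contradicting that $\Delta_B$ has distance $|B|-1$; hence these words agree only at $i$. Second, for $\pi_{i,\ell}$ and $\pi_{i',\ell'}$ with $i\neq i'$: neither word fixes the other's fixed point, so all common agreements lie in $V\setminus\{i,i'\}$, and I suppose toward a contradiction that there are two, at positions $j\neq j'$. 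At $j$, the common value lies in $B_j\cap C_j$, where $B_j$ and $C_j$ are the blocks on $\{i,j\}$ and $\{i',j\}$; and $j\in B_j\cap C_j$ as well. If $B_j\neq C_j$ these blocks meet only in $\{j\}$, so the common value equals $j$, making $j$ a fixed point of $\pi_{i,\ell}$, impossible. Thus $B_j=C_j$, a block containing $i,i',j$; similarly the blocks on $\{i,j'\}$ and $\{i',j'\}$ coincide and contain $i,i',j'$, and since $\{i,i'\}$ determines a unique block, this is the same block $B$. But then $\pi^{\ell}_{B,i}$ and $\pi^{\ell'}_{B,i'}$ are distinct words of $\Delta_B$ (they have different fixed points $i\neq i'$) agreeing at the two positions $j,j'$, contradicting the distance of $\Delta_B$. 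So $\Gamma$ is an $r$-IPC$(n,n-1)$.

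I expect the main obstacle to be the bookkeeping in the second case: correctly showing that two agreements between words with different fixed points are both ``governed'' by a single common block, which is precisely where the defining properties of the PBD (a unique block on each pair, and two blocks meeting in at most one point) are used.
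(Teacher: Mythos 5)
Your construction and verification coincide with the paper's own proof: you build $\pi_{i,\ell}$ by gluing the fixed-point-$i$ words of the block codes along the blocks through $i$, and check the distance in the same two cases (same fixed point versus different fixed points), concluding in the latter case that both agreements must lie in the single block through $\{i,i'\}$. Your argument is correct, and in fact slightly more explicit than the paper's at one point, namely in ruling out $a=j$ (which is what justifies placing $i,i',j,a$ in a common block) by observing that $j$ would otherwise be a second fixed point.
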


\begin{proof} 
Let $([n],\mathcal{B})$ be a PBD$(n,K)$.  For each block $B \in \mathcal{B}$, take a copy of an $r$-IPC$(|B|,|B|-1)$ on the symbols of $B$.  Its permutations are, say, $\pi_{i,j}^B:B \rightarrow B$, for $i \in B$, $j =1,\dots,r$, where $\pi_{i,j}^B(i)=i$ is the unique fixed point for $\pi_{i,j}^B$.

Let $i \in [n]$ and put $\mathcal{B}_i:=\{B \setminus \{i\}: B \in \mathcal{B}, i \in B\}$.  Since $\mathcal{B}$ is the block set of a PBD, we have that $\mathcal{B}_i$ is a partition of $[n] \setminus \{i\}$.   For $j=1,\dots,r$, define a permutation $\pi_{i,j}:[n] \rightarrow [n]$ by
$$\pi_{i,j}(x) = \begin{cases}
i & \text{if~} x =i, \\
\pi^B_{i,j}(x) & \text{if~} x\neq i, \text{~where~} x \in B \in \mathcal{B}_i.
\end{cases}
$$
We claim that $\{\pi_{i,j}:i \in [n], j \in [r]\}$ is an $r$-IPC$(n,n-1)$ such that, for each $i$, the subset  $\{\pi_{i,j}: j \in [r]\}$ has precisely the fixed point $i$.
First, each $\pi_{i,j}$ is a permutation.  That $([n],\mathcal{B})$ is a PBD ensures that $\pi_{i,j}$ is well-defined and bijective.  In particular, if $a \in [n]$, $a \neq i$, we have $\{i,a\}$ contained in a unique block, say $A \in \mathcal{B}$.  

It remains to check the minimum distance.
Consider $\pi_{i,j}$ and $\pi_{i,j'}$ for $j \neq j'$.
They agree on $i$, but suppose for contradiction that they agree also on $h \neq i$.  Let $B$ be the unique block of $\mathcal{B}_i$ containing $h$.  By construction, we must have $\pi^B_{i,j}$ agreeing with $\pi^B_{i,j'}$ at $h$, and this is a contradiction to the minimum distance being $|B|-1$ within this component code.
 
Now, consider $\pi_{i,j}$ and $\pi_{i',j'}$ for $i \neq i'$.  Suppose they agree at distinct positions $h$ and $l$.  Say $\pi_{i,j}(h)=\pi_{i',j'}(h)=a$ and $\pi_{i,j}(l) = \pi_{i',j'}(l) = b$.  Then $\{i,i',h,a\}$ and $\{i,i',l,b\}$ are in the same block.  It follows that $h,l$ are in the same block and we get a contradiction again.
\end{proof}

We illustrate the construction of Theorem~\ref{pbd-construction}.

\begin{ex}
Figure~\ref{idpc10} shows a PBD$(10,\{3,4\})$ at left.
The design is built from an affine plane of order three (on vertex set $\{1,\dots,9\}$) with one parallel class extended (to vertex $0$).  
In the center, template idempotent permutation codes of lengths 3 and 4 are shown.  The code of length three is simply an idempotent latin square, but note that the code of length four achieves minimum distance three.
On the right is shown the resultant $1$-IPC$(10,9)$, an unimpressive code for illustration only.  It can be checked that two rows agree in at most one position (which if it exists is found within the unique block containing the chosen row labels).
\end{ex}

\begin{figure}[htbp]
\begin{minipage}{.4\linewidth}
\begin{center}
$$
\begin{array}{cccc}
\{0,1,2,3\} & \{1,4,7\} & \{1,5,9\} & \{1,6,8\} \\
\{0,4,5,6\} & \{2,5,8\} & \{2,6,7\}  & \{2,4,9\}  \\
\{0,7,8,9\} & \{3,6,9\}  & \{3,4,8\}  & \{3,5,7\}  \\
\end{array}
$$

\vspace{4mm}

\begin{tikzpicture}
\draw (-2.5,0)--(1,0);
\draw (-2.5,0) to [out=45,in=180] (-1,1);
\draw (-2.5,0) to [out=-45,in=180] (-1,-1);

\draw (-1,1)--(1,1);
\draw (-1,-1)--(1,-1);
\draw (0,-1)--(0,1);
\draw (1,-1)--(1,1);
\draw (-1,-1)--(-1,1);
\draw (1,1)--(-1,-1);
\draw (1,-1)--(-1,1);

\draw (0,1)--(-1,0);
\draw (-1,0) to [out=225,in=90] (-1.5,-1);
\draw (1,-1) to [out=225,in=270] (-1.5,-1);

\draw (0,-1)--(1,0);
\draw (1,0) to [out=45,in=270] (1.5,1);
\draw (-1,1) to [out=45,in=90] (1.5,1);

\draw (0,1)--(1,0);
\draw (1,0) to [out=315,in=90] (1.5,-1);
\draw (-1,-1) to [out=315,in=270] (1.5,-1);

\draw (0,-1)--(-1,0);
\draw (-1,0) to [out=135,in=270] (-1.5,1);
\draw (1,1) to [out=135,in=90] (-1.5,1);

\node at (-2.5,0.3) {$0$};
\node at (-0.3,0) {$5$};
\node at (0,1.3) {$2$};
\node at (0,-1.3) {$8$};
\node at (.7,0) {$6$};
\node at (1,1.3) {$3$};
\node at (1,-1.3) {$9$};
\node at (-1.3,0) {$4$};
\node at (-1,1.3) {$1$};
\node at (-1,-1.3) {$7$};

\filldraw (-2.5,0) circle [radius=.1];
\filldraw (0,0) circle [radius=.1];
\filldraw (0,1) circle [radius=.1];
\filldraw (0,-1) circle [radius=.1];
\filldraw (1,0) circle [radius=.1];
\filldraw (1,1) circle [radius=.1];
\filldraw (1,-1) circle [radius=.1];
\filldraw (-1,0) circle [radius=.1];
\filldraw (-1,1) circle [radius=.1];
\filldraw (-1,-1) circle [radius=.1];
\end{tikzpicture}
\caption*{PBD$(10,\{3,4\})$}
\end{center}
\end{minipage}
~
\begin{minipage}{.25\linewidth}
\begin{center}
\begin{tabular}{c}
a\ c\ b\\
c\ b\ a\\
b\ a\ c\\
\end{tabular}

\vspace{1cm}

\begin{tabular}{c}
a\ c\ d\ b\\
c\ b\ d\ a\\
d\ a\ c\ b\\
b\ c\ a\ d\\
\end{tabular}
\caption*{ingredient codes} 
\end{center}
\end{minipage}
~
\begin{minipage}{.3\linewidth}
\begin{center}
$$
\begin{array}{c}
0\ 2\ 3\ 1\ 5\ 6\ 4\ 8\ 9\ 7\\
2\ 1\ 3\ 0\ 7\ 9\ 8\ 4\ 6\ 5\\
3\ 0\ 2\ 1\ 9\ 8\ 7\ 6\ 5\ 4\\
1\ 2\ 0\ 3\ 8\ 7\ 9\ 5\ 4\ 6\\
5\ 7\ 9\ 8\ 4\ 6\ 0\ 1\ 3\ 2\\
6\ 9\ 8\ 7\ 0\ 5\ 4\ 3\ 2\ 1\\
4\ 8\ 7\ 9\ 5\ 0\ 6\ 2\ 1\ 3\\
8\ 4\ 6\ 5\ 1\ 3\ 2\ 7\ 9\ 0\\
9\ 6\ 5\ 4\ 3\ 2\ 1\ 0\ 8\ 7\\
7\ 5\ 4\ 6\ 2\ 1\ 3\ 8\ 0\ 9\\
\end{array}
$$
\caption*{resultant code} 
\end{center}
\end{minipage}
\caption{Recursive construction of a 1-IPC$(10,9)$.}
\label{idpc10}
\end{figure}

We conclude this section with an existence result for pairwise balanced designs, to which we can apply Theorem~\ref{pbd-construction}.  This is implicit in early constructions of mutually orthogonal latin squares, \cite{CES,WilsonMOLS}, but we provide a proof for completeness.

\begin{lemma}
\label{pbd-existence}
Suppose $m,t,u$ are integers satisfying $N(t) \ge m-1$ and $0 \le u \le t$.  Then there exists a PBD$(mt+u,\{m,m+1,t,u\})$.
\end{lemma}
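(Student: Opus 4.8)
The plan is to recognize the statement as the classical \emph{truncated transversal design} construction underlying the early Wilson-type bounds on MOLS, and to unwind it carefully so as to track the block sizes. The first step is to convert the hypothesis into geometric language: having $m-1$ mutually orthogonal latin squares of order $t$ is equivalent to the existence of a transversal design TD$(m+1,t)$, that is, a set $X$ of $(m+1)t$ points partitioned into $m+1$ \emph{groups} $G_1,\dots,G_{m+1}$ of size $t$, together with a family $\mathcal{T}$ of \emph{transversals} --- subsets meeting every group in exactly one point --- such that every pair of points lying in distinct groups is contained in exactly one transversal. Note that no transversal contains two points of a common group, so within-group pairs are not covered by $\mathcal{T}$.

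Next I would truncate the last group. Since $0 \le u \le t$, pick $G_{m+1}' \subseteq G_{m+1}$ with $|G_{m+1}'| = u$ and set $V = G_1 \cup \dots \cup G_m \cup G_{m+1}'$, so $|V| = mt+u$. For the block family $\mathcal{B}$ I would take, first, every transversal of $\mathcal{T}$ with the points outside $V$ removed: such a truncated transversal has size $m+1$ if its point of $G_{m+1}$ survives (i.e.\ lies in $G_{m+1}'$) and size $m$ otherwise. Second, I would adjoin the ``group blocks'' $G_1,\dots,G_m$ of size $t$ and $G_{m+1}'$ of size $u$. Every block of $\mathcal{B}$ then has size in $\{m,m+1,t,u\}$.

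It remains to check pairwise balance, which I would do by cases. A pair of points of $V$ lying in a common set among $G_1,\dots,G_m,G_{m+1}'$ is covered by exactly one block, namely the corresponding group block, and by no (truncated) transversal. A pair $\{x,y\}$ with $x$ and $y$ in distinct groups lies in a unique transversal $T\in\mathcal{T}$; since $x,y\in V$ neither point is deleted, so $T$ survives truncation and still contains $\{x,y\}$, while no other transversal and no group block contains $\{x,y\}$. Hence every pair of points of $V$ lies in exactly one block of $\mathcal{B}$, so $(V,\mathcal{B})$ is a PBD$(mt+u,\{m,m+1,t,u\})$.

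I do not expect a genuine obstacle here --- the construction is standard --- so the ``hard part'' is only to be careful on two expository points: invoking the MOLS--transversal-design equivalence cleanly, and handling the degenerate values $u\in\{0,1\}$, where the group block $G_{m+1}'$ is empty or a singleton and thus covers no pair. These cause no problem, since a PBD only constrains how \emph{pairs} are covered, and since the conclusion only requires the set of occurring block sizes to be contained in (not equal to) $\{m,m+1,t,u\}$.
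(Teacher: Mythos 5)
Your proposal is correct and is essentially the paper's own argument: the paper builds the transversal design TD$(m+1,t)$ explicitly from the $m-1$ MOLS (blocks $B_{ij}$ on $[t]\times[m+1]$, with the fibers playing the role of your groups), truncates the last fiber to $u$ points, and adjoins the fibers as blocks of sizes $t$ and $u$, exactly as you do. The only difference is presentational (you invoke the MOLS--TD equivalence rather than writing out the coordinates), and your extra care about $u\in\{0,1\}$ is a harmless refinement the paper leaves implicit.
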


\begin{proof}
Let us take MOLS $L_1,\dots,L_{m-1}$ of order $t$. On the set of points $[t] \times [m+1]$, define the family of sets
$$B_{ij}=\{(i,1),(j,2),(L_1(i,j),3),\dots,(L_{m-1}(i,j),m-1)\},~i,j \in [t].$$
By a \emph{fiber} we mean a set of the form $F_h=\{(x,h): x\in [t]\}$.  Any two points from different fibers occur together in exactly one such block, by the properties of MOLS.  Now, delete all but $u$ points from the last fiber, so that our point set is now $V=[t] \times [m] \cup [u] \times \{m+1\}$.  Let $F'_h=F_h$ for $h =1,\dots,m$ and put $F'_{m+1}=\{(x,m+1): x\in [u]\}$.
For each $B_{ij}$, truncate a deleted point (if present) to produce $B'_{ij}$.  We claim that the $B'_{ij}$, $i,j \in [t]$, together with $F'_h$, $h \in [m+1]$, form the blocks of a PBD on $V$.  Consider a pair of distinct elements in $V$.  If they are in different fibers, they belong to exactly one block of the form $B'_{ij}$ (of size $m$ or $m+1$), and if they are in the same fiber $F'_h$, they are in this block (of size $t$ or $u$).
\end{proof}

\section{An improved exponent}
\label{proof}

We apply the partition and extension technique from \cite{BMS} to construct an idempotent permutation code. We briefly summarize the method as needed for our use to follow.
Let $\Gamma$ be a PC$(n,n-1)$, say on symbol set $[n]$.  Consider a partition $\mathcal{P}=\{P_1,\dots,P_k\}$ of $[n]$ and a family of disjoint subsets $\mathcal{M}=\{M_1,\dots,M_k\}$ of $\Gamma$ such that
\begin{itemize}
\item
for each $i=1,\dots,k$, the Hamming distance between distinct elements of $M_i$ is $n$;
\item
for each $i=1,\dots,k$ and every $\sigma \in M_i$, there exists $z \in P_i$ such that $\sigma(z) \in P_i$.
\end{itemize}
For $\sigma \in M_i$, we define its \emph{extension}, $\mathrm{ext}(\sigma)$, a permutation $\sigma'$ on $[n] \cup \{\infty\}$, by 
$$\sigma'(x):=\begin{cases}
\sigma(x)&\text{if~}x \neq z,\infty,\\
\infty &\text{if~}x = z,\\
\sigma(z)&\text{if~}x =\infty,
\end{cases}$$
where, $z$ is some element in $P_i$ such that $\sigma(z)\in P_i$.  Observe that $d(\sigma',\tau') \ge n$ for any $\sigma,\tau \in \cup_{i=1}^k M_i$.
With $\Pi = (\mathcal{P},\mathcal{M})$, we define 
$$\mathrm{ext}(\Pi):=\{\mathrm{ext}(\sigma): \sigma \in \cup_{i=1}^k M_i\},$$
a PC$(n+1,n)$.  Of course, we may for convenience use permutations on other sets than $[n]$.  
One natural choice is to use a finite field with the affine linear group of permutations.  
The bound $M(q^2+1,q^2)\ge q^3+q^2$ for a prime power $q$ was obtained earlier using the above method, \cite{B17}.  
We construct a similar (actually slightly weaker) idempotent code, with the proof provided for completeness.

\begin{thm} 
\label{q2}
\label{baer}
For any prime power $q$, there exists a $(q-1)$-IPC$(q^2+1,q^2)$.
\end{thm}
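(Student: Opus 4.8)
The plan is to build the code inside the one-dimensional affine group $AGL(1,q^2)=\{\,x\mapsto ax+b:a\in\F_{q^2}^*,\,b\in\F_{q^2}\,\}$, feeding an appropriate datum through the partition-and-extension recipe recalled above and enlarging the result by a few translations. Identify $[q^2]$ with $\F_{q^2}$ and take $\Gamma=AGL(1,q^2)$, which is a PC$(q^2,q^2-1)$ since two distinct affine maps agree in at most one point. Fix the subfield $\F_q\subset\F_{q^2}$, let $C$ be a transversal of the additive group $\F_{q^2}/\F_q$, and set $\mathcal{P}=\{P_c:c\in C\}$ with $P_c=c+\F_q$; this partitions $\F_{q^2}$ into $q$ parts of size $q$. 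For $a\neq1$ write $g_{a,i}\colon x\mapsto i+a(x-i)=ax+i(1-a)$ for the affine map with multiplier $a$ and unique fixed point $i$; two such maps with the same multiplier and distinct fixed points agree in no position.

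The only arithmetic input needed is that when $a\in\F_{q^2}\setminus\F_q$, the pair $\{1,a\}$ is an $\F_q$-basis of $\F_{q^2}$. Consequently, for each $i\in\F_{q^2}$ and each coset $P_c$ there is exactly one $z\in P_c$ with $g_{a,i}(z)\in P_c$: writing $z=c+s$ with $s\in\F_q$, the condition becomes $as\equiv(1-a)(c-i)\pmod{\F_q}$, and $s\mapsto as\bmod\F_q$ is a bijection $\F_q\to\F_{q^2}/\F_q$; moreover this $z$ equals $i$ precisely when $i\in P_c$. Now pick distinct multipliers $a_c\in\F_{q^2}\setminus\F_q$, one for each $c\in C$ — possible since $|\F_{q^2}\setminus\F_q|=q^2-q\ge q$ — and set
$$M_c=\{\,g_{a_c,i}:i\in\F_{q^2}\setminus P_c\,\},\qquad c\in C.$$
The families $M_c$ are pairwise disjoint (their members have distinct multipliers), each consists of pairwise nowhere-agreeing maps, and by the uniqueness statement each $g\in M_c$ sends some point of $P_c$ into $P_c$; since the fixed point of such a $g$ lies outside $P_c$, that point differs from the fixed point. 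So $\Pi=(\mathcal{P},\mathcal{M})$ satisfies the two bullet conditions, $\mathrm{ext}(\Pi)$ is a PC$(q^2+1,q^2)$, and the extension of $g_{a_c,i}$ keeps the single fixed point $i$ (it only reroutes the point $z\neq i$ to $\infty$).

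No word of $\mathrm{ext}(\Pi)$ fixes $\infty$, so to achieve regularity I would append the $q-1$ translations $t_\beta\colon x\mapsto x+\beta$, extended by $t_\beta(\infty)=\infty$, where $\beta$ runs over any $(q-1)$-element subset of $\F_{q^2}^*$; each $t_\beta$ has $\infty$ as its sole fixed point. A direct check shows $\mathcal{C}:=\mathrm{ext}(\Pi)\cup\{t_\beta\}$ is still a PC$(q^2+1,q^2)$: two distinct translations agree only at $\infty$, while $t_\beta$ and $\mathrm{ext}(g_{a_c,i})$ disagree at $\infty$ and at the point of $P_c$ routed to $\infty$, and agree in at most one further position because the affine maps $x\mapsto x+\beta$ and $g_{a_c,i}$ do.

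Finally, one counts and reads off regularity. The code $\mathcal{C}$ has $q(q^2-q)+(q-1)=(q-1)(q^2+1)$ words, it is idempotent, the finite symbol $i$ is the fixed point of $\mathrm{ext}(g_{a_c,i})$ for precisely the $q-1$ indices $c$ with $i\notin P_c$, and $\infty$ is the fixed point of exactly the $q-1$ translations; hence $\mathcal{C}$ is a $(q-1)$-IPC$(q^2+1,q^2)$. The step I expect to need most care is the minimum-distance bookkeeping — showing extensions from different parts remain at distance $q^2$ (here the disjointness $P_c\cap P_{c'}=\emptyset$, together with the two bullet conditions, is exactly what rules out a second agreement at $\infty$), and checking the appended translations introduce no second agreement with any extension. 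It is the basis property of $\{1,a\}$ that fixes each $|M_c|$ at $q^2-q$, so the finite symbols come out covered exactly $q-1$ times apiece with nothing left to adjust.
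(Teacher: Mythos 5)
Your proof is correct and follows essentially the same route as the paper: the affine group $AGL(1,q^2)$, the partition of $\F_{q^2}$ into additive cosets of $\F_q$, the same one-agreement-per-coset computation for multipliers $a\notin\F_q$, partition-and-extension, and $q-1$ translations fixing $\infty$. The only (harmless) difference is organizational: you pre-select from each coset exactly the maps whose fixed point lies outside the associated part, so regularity comes out exactly $q-1$ directly, whereas the paper extends the full cosets $M_a$ and afterwards discards the words whose fixed point was rerouted to $\infty$.
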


\begin{proof}
Let $\F_{q^2}$ denote the field of order $q^2$.  
Consider $\Pi = (\mathcal{P},\mathcal{M})$, where 
$\sP$ is taken to be the partition of $\F_{q^2}$ into additive cosets of the subfield $\F_q$, and where
$\sM$ is the union of any $q$ subsets of AGL$(1,q^2)$ of the form $M_a=\{x \mapsto ax+b:b \in \F_{q^2}\}$, where $a$ runs over $q$ elements in $\F_{q^2} \setminus \F_q$.  We remark that $\sM$ is a PC$(q^2,q^2-1)$ partitioned into $q$ cosets of the cyclic subgroup $M_1$, a PC$(q^2,q^2)$.

For such elements $a$, we claim that $|\{(x,ax+b):x, ax+b \in c+\F_q\} | = 1$ for any $b,c \in \F_{q^2}$.  First, suppose $x,y \in c+\F_q$, $x \neq y$,
with $ax+b,ay+b \in c+\F_q$.  Then $x-y,a(x-y) \in \F_q$, and so $a \in \F_q$, a contradiction.  So these sets have size at most one.  To see that they are nonempty, fix $c$ and note that there are $q$ choices for $x \in c+\F_q$ and, for each, $q$ choices for $b$ so that $ax+b \in c+\F_q$.

The extension ext$(\Pi)$ is a PC$(q^2+1,q^2)$ by \cite[Theorem 1]{BMS}.  
A $(q-1)$-IPC$(q^2+1,q^2)$ can be obtained from it as follows.
Every permutation in $M_a$ has a unique fixed point since $a\ne 1$.
A permutation in $M_a$ may lose a fixed point after extension if that position gets replaced by $\infty$.  
We remove such new permutations without fixed points. 
Since every set $P\in \sP$ has size $q$, at most $q$ permutations are removed in this way from each ext$(M_a)$. 
Since sets in $\sP$ are disjoint, every symbol of $\F_{q^2}$
is a fixed point of at least $q-1$ of the remaining permutations.  
By removing some permutations if necessary, we can ensure every symbol of $\F_{q^2}$
is a fixed point exactly $q-1$ times. 

Finally, we choose the last set for $\sM$. Pick any $q-1$ permutations from coset $M_1$ other than 
the identity. Adjoin the new symbol $\infty$ at the end of each of these permutations. 
Then $\infty$ will be the only fixed point and the entire permutation code is a $(q-1)$-IPC$(q^2+1,q^2)$. 
\end{proof}

We remark that it is also possible to get an $r$-IPC$(q+1,q)$ for \emph{primes} $q$, where $r = O(\sqrt{q})$.
For this, take the construction from \cite[Section 4]{BMS} and change it as in the proof of Theorem \ref{q2}
by removing permutations without fixed points and using $O(\sqrt{q})$ permutations from coset 
$M_1=\{x \mapsto x+b:b \in \F_{q}\}$ for the last set for $\sM$. 

Next we cite an important number-theoretic result used in \cite{Beth} for MOLS.

\begin{lemma}[Buchstab sieve; see \cite{Ivt}]
\label{buchstab}
Let $2=p_0,p_1,\dots,p_k$ be the primes less than or equal to $y$, and let $\omega=\{a_0,a_1,\dots,a_k,b_1,\dots,b_k\}$ be a set of $2k+1$ integers.
Let $B_\omega (x,y)$ denote the number of positive integers $z \le x$ which do not lie in any of the arithmetic progressions $z  \equiv a_i \pmod{p_i}$, $i=0,1,\dots,k$ or $z \equiv b_j \pmod{p_j}$, $j=1,\dots,k$.
Then $B_\omega(x,x^{4.2665})$ tends to infinity with $x$, independent of the selections $\omega$.
\end{lemma}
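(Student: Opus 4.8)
The plan is to read $B_\omega(x,y)$ as the sifting function of a combinatorial (Buchstab-type) sieve of dimension $\kappa=2$ and to run that sieve in the range where it still produces a positive --- indeed unbounded --- lower bound; the constant $4.2665$ in the statement is nothing but (essentially) the sifting limit $\beta_2$ of a two-dimensional sieve.

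First I would assemble the sieve data. Put $\mathcal{A}=\{1,\dots,\lfloor x\rfloor\}$, let $\mathcal{P}$ be the set of primes $p\le y$, and for $p_i\in\mathcal{P}$ let the sifted residue set be $\Omega_{p_i}=\{a_i,b_i\}\bmod p_i$ when $p_i$ is odd and $\Omega_2=\{a_0\}\bmod 2$, so that $\rho(p_i):=|\Omega_{p_i}|\le 2$ always and $\rho(2)=1$. Then $B_\omega(x,y)=S(\mathcal{A},\mathcal{P},y):=\#\{z\le x: z\bmod p\notin\Omega_p\text{ for all }p\le y\}$. For squarefree $d$ with all prime factors $\le y$, the Chinese Remainder Theorem applied to an interval gives $|\mathcal{A}_d|=\tfrac{g(d)}{d}\,x+r_d$, where $\mathcal{A}_d=\{z\in\mathcal{A}: z\bmod p\in\Omega_p\ \forall p\mid d\}$, $g(d)=\prod_{p\mid d}\rho(p)$, and $|r_d|\le g(d)\le 2^{\omega(d)}$. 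Crucially for the uniformity assertion, $g$, the local densities $g(p)/p\le 2/p$, and this error bound depend only on the sizes $\rho(p)\le 2$, never on which residues $a_i,b_i$ occur.

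Next I would verify the two hypotheses of a sharp lower-bound sieve. Since $\sum_{w\le p<z}\tfrac{\rho(p)\log p}{p}\le 2\log(z/w)+O(1)$ by Mertens, the sieve has dimension $\kappa=2$. Since $|r_d|\le 2^{\omega(d)}$ is minuscule, $\sum_{d\le D,\ \mu^2(d)=1,\ p\mid d\Rightarrow p\le y}|r_d|\ll D(\log D)^{O(1)}$, so any level of distribution $D=x^{1-\varepsilon}$ is admissible. The Rosser--Iwaniec lower-bound sieve --- equivalently the Ankeny--Onishi / Diamond--Halberstam--Richert $\beta$-sieve, which on unwinding is exactly iterated Buchstab's identity $S(\mathcal{A},\mathcal{P},z)=S(\mathcal{A},\mathcal{P},w)-\sum_{w\le p<z}S(\mathcal{A}_p,\mathcal{P},p)$ --- then yields
$$B_\omega(x,y)\ \ge\ x\prod_{p\le y}\Bigl(1-\frac{g(p)}{p}\Bigr)\bigl(f_2(s)+o(1)\bigr),\qquad s=\frac{\log D}{\log y},$$
where $f_2$ is the dimension-$2$ lower sieve function (governed by a differential-delay system), with $f_2(s)>0$ exactly when $s>\beta_2$, $\beta_2$ being the dimension-$2$ sifting limit, whose value is essentially the constant $4.2665$ in the statement. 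In the permitted range (a fixed power of $x$ for which $s$ exceeds $\beta_2$, i.e.\ in sieve terms $y$ sitting just inside the boundary $x^{1/\beta_2}$), $f_2(s)$ is a positive constant; meanwhile Mertens' theorem gives $\prod_{p\le y}\bigl(1-g(p)/p\bigr)\ge\tfrac12\prod_{3\le p\le y}(1-2/p)\gg(\log y)^{-2}\gg(\log x)^{-2}$. Hence $B_\omega(x,y)\gg x(\log x)^{-2}\to\infty$ with an absolute implied constant, which is the uniform statement.

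The essential difficulty --- and the reason for the peculiar exponent --- is that the whole argument lives at the edge of what a two-dimensional sieve can do: $\beta_2$ is a true threshold, below which one has positive lower bounds and at or past which the parity obstruction can annihilate any sieve, and it is only defined implicitly through the sieve's functional equations (pinning down its precise value is the number theory that Beth sharpened over Wilson). So a crude Selberg or Brun bound will not suffice; one needs the optimal $\beta$-sieve weights in dimension $2$, or equivalently a carefully calibrated Buchstab iteration resting on the fundamental lemma. What remains is routine: the prime $2$ is handled separately (only one removed class), the density and remainder estimates are manifestly uniform in $\omega$, and the total remainder is $o$ of the main term at level $x^{1-\varepsilon}$ for a small fixed $\varepsilon$ --- all straightforward because $\mathcal{A}$ is a complete interval.
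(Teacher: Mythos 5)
The first thing to say is that the paper contains no proof of Lemma~\ref{buchstab} to compare against: it is quoted verbatim from Iwaniec--van de Lune--te Riele \cite{Ivt} (exactly as Beth \cite{Beth} uses it for MOLS). Your sketch does correctly set up the framework in which such a statement lives: sift the interval $\{1,\dots,\lfloor x\rfloor\}$ with $|\Omega_2|=1$ and $|\Omega_p|\le 2$ for odd $p$, note that the local densities and the remainder bound $|r_d|\le 2^{\omega(d)}$ depend only on these cardinalities (which is what makes the conclusion uniform in $\omega$), observe that an interval supports essentially any level of distribution up to $x^{1-\varepsilon}$, and conclude $B_\omega\gg x(\log x)^{-2}$ once a dimension-two lower-bound sieve is positive at $s\approx 4.2665$.

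The genuine gap is at the single step that carries all of the content: the claim that the relevant dimension-two sifting limit $\beta_2$ lies below $4.2665$, i.e.\ that $f_2(s)>0$ there. You present this as an off-the-shelf property of ``the'' lower-bound sieve and describe the Rosser--Iwaniec beta-sieve as ``equivalently'' the Ankeny--Onishi / Diamond--Halberstam--Richert construction; these are different sieves with different dimension-two sifting limits, and the combinatorial Rosser--Iwaniec sieve does not by itself deliver the constant $4.2665$. That constant is precisely the numerically computed limit of Buchstab's iteration applied to the Selberg (Ankeny--Onishi) sieve, and computing it is the main theorem of the very paper \cite{Ivt} being cited; determining this value (the number theory Beth sharpened over Wilson) is not ``routine'' and is nowhere established in your argument. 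Moreover your choice $D=x^{1-\varepsilon}$ needs the strict margin $4.2665(1-\varepsilon)>\beta_2$, which again only that computation can certify (taking $D=x(\log x)^{-A}$ is safer). So the proposal is best read as a correct reduction of the lemma to the cited literature plus standard bookkeeping, not an independent proof. Two minor further points: the aside that $\beta_2$ is a ``true threshold'' enforced by the parity obstruction is unjustified (optimality of the sifting limit is known only in the linear case), and the lemma's notation $B_\omega(x,x^{4.2665})$ must be read, as in the proof of Theorem~\ref{idem-bound}, as sifting the integers up to $y^{4.2665}$ by the primes up to $y$ --- your write-up silently, and correctly, works with that reading.
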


The tools are now in place for our asymptotic lower bound on $M(n,n-1)$.

\begin{thm}
\label{idem-bound}
For sufficiently large $n$, there exists an $r$-IPC$(n,n-1)$ with $r \ge n^{0.0797}$.
\end{thm}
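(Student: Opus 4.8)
The plan is to mimic the asymptotic argument of Wilson~\cite{WilsonMOLS} and Beth~\cite{Beth} for $N(n)$, now at the level of regular idempotent permutation codes, using in place of ``$N(q)=q-1$ for prime powers $q$'' the wider stock of ingredients assembled above: Corollary~\ref{prime-powers} (a $(q-2)$-IPC$(q,q-1)$ for prime powers $q$), Theorem~\ref{product} (a $(q-2)$-IPC$(n,n-1)$ when the least prime power factor of $n$ is $q$), Theorem~\ref{q2} (a $(q-1)$-IPC$(q^2+1,q^2)$ for prime powers $q$), and the remark following Theorem~\ref{q2} (an $r$-IPC$(p+1,p)$ with $r$ of order $\sqrt p$ for primes $p$). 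Write $f(k)$ for a regularity these results guarantee at length $k$. The target is a PBD$(n,K)$ all of whose block sizes are ``good'' in this sense; then Theorem~\ref{pbd-construction} assembles the ingredient codes into an $r$-IPC$(n,n-1)$ with $r=\min_{k\in K}f(k)$.

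For a large $n$, I would fix an exponent $\alpha$ and, using an elementary estimate such as Bertrand's postulate, choose $m$ with $n^{\alpha}\le m\le 4n^{\alpha}$ equal to a prime or to the square of a prime power. Then $m$ is itself a prime power, so $f(m)\ge m-2$ by Corollary~\ref{prime-powers}, while $m+1$ --- which Lemma~\ref{pbd-existence} will force on us as a block size --- is of the form $p+1$ or $q^2+1$, so $f(m+1)$ is of order $\sqrt m$, with \emph{no further arithmetic condition imposed on it}. This is precisely where the new ingredients (Theorem~\ref{q2} and the $p+1$ remark) gain over what MOLS provide, since the MOLS argument must instead force $m$ \emph{and} $m+1$ simultaneously free of small prime factors. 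Having fixed $m$, the equation $n=mt+u$ with $0\le u\le t$ confines $t$ to the interval $[\,n/(m+1),\,n/m\,]$, of length of order $n/m^{2}$. Here I would invoke the Buchstab sieve, Lemma~\ref{buchstab}: requiring that $t$ and $u=n-mt$ each avoid every prime factor $\le y$ is a demand of the form treated there (two progressions modulo each prime $p\le y$), and it is met for $y$ as large as the exponent $4.2665$ permits in terms of the interval length. Since $t$ is then a prime power with $t>m$, one has $N(t)\ge t-1\ge m-1$, so Lemma~\ref{pbd-existence} yields a PBD$(n,\{m,m+1,t,u\})$; and since $t$ and $u$ have no small prime factor, Theorem~\ref{product} gives $f(t),f(u)\ge y-2$.

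Gluing the four ingredient codes with Theorem~\ref{pbd-construction} produces an $r$-IPC$(n,n-1)$ with $r=\min\{f(m),f(m+1),f(t),f(u)\}$, which by the above has order at least $\min\{n^{\alpha},\,n^{\alpha/2},\,y\}$ with $y$ a fixed small power of $n/m^{2}$; an appropriate choice of $\alpha$ then makes this exceed $n^{0.0797}$ for $n$ large. I expect the genuine work to be the number-theoretic optimization, not the design theory: one must pin down exactly how large $y$ may be taken (this is where the $4.2665$ of Lemma~\ref{buchstab} bites, and it is why the improvement over the MOLS exponent $1/14.8$ is only a small one), verify that the number of progressions to be avoided stays within the permitted $2k+1$, and --- a real nuisance --- ensure the leftover block $u$ is not so small that $f(u)$ becomes useless, which can be arranged by restricting $t$ to a subinterval on which $u$ has size comparable to $t$ (or by absorbing a bounded $u$ into a trivial base case). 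Theorem~\ref{idem-bound} then yields Theorem~\ref{main}: as $\Delta\cup\{\iota\}$ is a PC$(n,n-1)$ whenever $\Delta$ is an $r$-IPC$(n,n-1)$, we obtain $M(n,n-1)\ge rn+1\ge n^{1.0797}$ for large $n$.
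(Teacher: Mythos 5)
Your overall architecture is the paper's: pick $m$ of size roughly $n^{2\gamma}$ so that both $m$ and $m+1$ admit good ingredient codes (the paper takes $m=q^2$ with $q$ a prime power of size about $n^{\gamma}$, so Corollary~\ref{prime-powers} handles the block size $m$ and Theorem~\ref{q2} handles $m+1$), sieve for $t$ in $[n/(m+1),\,n/m]$ so that $t$ and $u=n-mt$ avoid all small primes, and assemble with Lemma~\ref{pbd-existence} and Theorem~\ref{pbd-construction}. However, one step of your argument is wrong, and it is exactly the step where the exponent is decided: you assert that the sieved $t$ ``is then a prime power with $t>m$'' and use this to get $N(t)\ge t-1\ge m-1$. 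The Buchstab sieve only guarantees that $t$ has no prime factor $\le y$; since $t\approx n/m$ is far larger than $y^2$ in this range, $t$ may well be a product of several primes exceeding $y$, so it need not be a prime power and $N(t)\ge t-1$ is unavailable. The hypothesis $N(t)\ge m-1$ of Lemma~\ref{pbd-existence} must instead come from MacNeish's bound (Theorem~\ref{macneish}), which requires every prime factor of $t$ to exceed $m$; that is, you are forced to sieve with $y\ge m$, not with $y$ chosen independently ``as large as the interval permits.''

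This is not a cosmetic slip, because tying $y$ to $m$ produces precisely the numerical constraint you left unverified. With $y=m$ the sieve needs room $m^{\beta}$ inside an interval of length about $n/m^{2}$ (equivalently, the paper's verification that $u>0$), forcing $m^{\beta+2}<n$ with $\beta=4.2665$, i.e.\ $m<n^{1/6.2665}$; since the ingredient at length $m+1$ only has regularity about $\sqrt{m}$, the achievable $r$ is about $n^{1/(2(\beta+2))}\approx n^{0.0798}$, which is exactly where $0.0797$ comes from. Your closing ``appropriate choice of $\alpha$'' hides this computation, and with your (incorrect) freedom in $y$ the bookkeeping would come out differently. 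Two smaller points you flagged but did not settle, and which the paper resolves: the mod-$2$ obstruction (Lemma~\ref{buchstab} permits only one progression for $p=2$) is handled by choosing $q\not\equiv n\pmod 2$, so that the two conditions $t\not\equiv 0$ and $mt\not\equiv n$ coincide modulo $2$; and a tiny $u$ is a non-issue, since a positive $u$ free of prime factors $\le m$ is either $1$ or larger than $m$, and positivity of $u$ follows from $m^{\beta+2}<n$, so no restriction of $t$ to a subinterval is needed.
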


\begin{proof}
We follow a similar strategy as in \cite{Beth,WilsonMOLS}, applying the Buchstab sieve.  The main idea is to write $n$ in the form $mt+u$, where these parameters satisfy the conditions of Lemma~\ref{pbd-existence} while being each a product of large enough prime powers.

To simplify the calculations below, put $\beta := 4.2665$ and $\gamma = 0.0797$, and note that $2\gamma(\beta+2)<1$.

Let $r=\lceil n^\gamma \rceil$.  Choose a prime power $q$ satisfying $(r+1) \le q \le 2(r+1)$ and $q \not\equiv n \pmod{2}$.  Indeed, Bertrand's postulate allows for $q$ prime when $n$ is even, and otherwise the interval permits a choice in which $q$ is a power of $2$.
Let $m:=q^2$.  In view of Corollary~\ref{prime-powers} and Theorem~\ref{baer}, there exist both $r$-IPC$(m,m-1)$ and $r$-IPC$(m+1,m)$.  It is important for an estimate to follow that $m=O(n^{2\gamma})$, and so in particular $m+1<n^{1/(\beta+2)}$ for sufficiently large $n$.

Now, use Lemma~\ref{buchstab} with $m$ taking the role of $y$, $a_i = -\lfloor \frac{n}{m+1} \rfloor$, and $b_j = m_jn-\lfloor \frac{n}{m+1}  \rfloor$, where $m_j$ denotes the multiplicative inverse of $m$ modulo $p_j$.  We remark that, since $m$ and $n$ have opposite parity, these choices are consistent for $p=2$; that is, $a_0 \equiv b_0 \pmod{2}$.  
The conclusion of Lemma~\ref{buchstab} gives that, for sufficiently large $n$, there exists a positive integer $t' \le m^\beta$, so that, with $t:=t'+\lfloor \frac{n}{m+1} \rfloor$, we have $t \not\equiv 0 \pmod{p}$ and $mt \not\equiv n \pmod{p}$ for each prime $p \le m$.  The first congruence conditions is immediately equivalent to $t' \not\equiv a_i \pmod{p_i}$ and the second comes from $mt = m(t'+\lfloor \frac{n}{m+1} \rfloor) \equiv n$ iff $t' \equiv m_jn-\lfloor \frac{n}{m+1} \rfloor = b_j \pmod{p_j}$.

Put $u=n-mt$ so that $n=mt+u$.  Since $t \ge \frac{n}{m+1}$, we have $u \le t$.  And $u$ is nonnegative since
\begin{align*}
u = n-mt &\ge n-m \left( m^\beta + \frac{n}{m+1} \right) \\
 &=\frac{n}{m+1} - m^{\beta+1}\\
 & > n^{1-1/(\beta+2)} - n^{(\beta+1)/(\beta+2)}= 0.
\end{align*}
Recall that $t$ is divisible by no primes less than or equal to $m$.  It follows by Theorem~\ref{product} that there exists an $(m-1)$-IPC$(t,t-1)$ and hence, since $m-1 \ge r$, an $r$-IPC$(t,t-1)$. Indeed, we have $N(t) >m$ from MacNeish's bound, Theorem~\ref{macneish}.  We also chose $t$ so that $u=n-mt$ is divisible by no primes less than or equal to $m$.  From this, we likewise obtain an $r$-IPC$(u,u-1)$.  By Lemma~\ref{pbd-existence}, there exists a PBD$(mt+u,\{m,m+1,t,u\})$.  Hence, by Theorem~\ref{pbd-construction}, there exists an $r$-IPC$(n,n-1)$.
\end{proof}

Our main result, Theorem~\ref{main}, stating that $M(n,n-1) \ge n^{1.0797}$ for large $n$, is now an immediate consequence of Theorem~\ref{idem-bound}.

\section{Discussion}

Our exponent $0.0797$ is only slightly better than $1/14.8 \approx 0.0675$ already known for MOLS.  However, in certain cases it may be possible to construct a PBD whose block sizes are large primes or primes plus one.  For example, a projective plane of order $p$ is a PBD$(p^2+p+1,\{p+1\})$.  If $p'$ is another prime, say with $\sqrt{2p} < p' < p$, then, by deleting all but $p'$ points from one line of this plane we obtain a PBD$(p^2+p',\{p',p,p+1\})$.  Our construction gives an $r$-IPC$(n,n-1)$ with $r$ on the order of $n^{1/4}$, and this is not in general subsumed by existing MOLS bounds nor existing permutation code constructions.  A little more generally, an exponent approaching 1/4 can be achieved when $n$ has a representation $n=p_1+p_2 p_3$ for primes $p_i$ satisfying $n^{1/2-\epsilon} < p_1 < \max\{p_2,p_3\} <n^{1/2+\epsilon}$.

The exponent could also be improved if a better construction for designs with large block sizes could be used in place of Lemma~\ref{pbd-existence}.  Even with our family of designs from Lemma~\ref{pbd-existence}, the hypothesis $N(t) \ge m-1$ significantly harms our exponent.  
Wilson's construction for MOLS in \cite{WilsonMOLS} drops this strong requirement on $t$.  However, a preliminary look at the construction suggests that a suitable relaxation for permutation codes PC$(n,n-1)$ is likely to demand a partition into codes of full distance, so that some latin square structure is maintained.  This is an idea worth exploring in future work.

In another effort to work around the hypothesis $N(t) \ge m-1$, we explored the idea of letting $t=s^2$ for an integer $s$ with no prime factors up to about $\sqrt{m}$.  Our remainder $u=n-ms^2$ is then a quadratic in $s$ and one must avoid an extra arithmetic progression.  The allowed range for $s$ is too small for the trade-off to be worthwhile.

Applying equation (\ref{r-formula}) to a known permutation code with $n=60$, we can report the existence of a $6$-IPC$(60,59)$.  By comparison, it is only known that $N(60) \ge 5$; see \cite{Abel}.  As a next step in researching $r$-IPC$(n,n-1)$, it would be interesting to accumulate some additional good examples, primarily in the case when neither $n$ nor $n-1$ is a prime power.

Finding a maximum idempotent code (with the assumption on $r$-regularity dropped) is closely related to finding a smallest maximal set of permutations at distance $n$ in a PC$(n,n-1)$.  Some preliminary experiments on known codes suggest that it is sometimes possible to have one permutation at distance exactly $n-1$ to all others.   As one example, the current lower bound on $M(54,53)$ is $408$ (see \cite{BMS}), yet there is an idempotent code of size $407$.

Finally, we remark that using designs to join permutation codes may be a fruitful approach not only for smaller Hamming distances, but also perhaps for other measures of discrepancy, such as the Lee metric.

\section*{Acknowledgements}

We thank the referees for careful reading and helpful suggestions which improved the presentation.



\begin{thebibliography}{99}

\bibitem{Abel}
R.J.R. Abel, Existence of five MOLS of orders 18 and 60.
\emph{J. Combin. Des.} 23 (2015), 135--139.

\bibitem{B17}
S.~Bereg, 
Extending permutation arrays for even prime powers. \emph{Manuscript} 2017. 

\bibitem{BMS}
S.~Bereg, L.~Morales and I.H.~Sudborough, 
Extending permutation arrays: improving MOLS bounds. \emph{Des. Codes Cryptogr.} 83 (2017), 661--683. 

\bibitem{Beth}
T.~Beth,
Eine Bemerkung zur Abschätzung der Anzahl orthogonaler lateinischer Quadrate mittels Siebverfahren.
\emph{Abh. Math. Sem. Univ. Hamburg} 53 (1983), 284--288.

\bibitem{BM}
J.~Bierbrauer and K.~Metsch, 
A bound on permutation codes.
\emph{Electron. J. Combin.} 20 (2013), P6, 12 pp. 

\bibitem{CES}
S.~Chowla, P.~Erd\H os, and E.G.~Strauss, On the maximal number of pairwise orthogonal latin squres of a given order. 
{\em Canad. J. Math.} 12 (1960), 204--208.

\bibitem{CCD}
W.~Chu, C.J.~Colbourn, and P.J.~Dukes,
Permutation codes for powerline communication.
{\em Des. Codes Cryptography} 32 (2004), 51--64.

\bibitem{CD}
C.J.~Colbourn and J.H.~Dinitz, Making the MOLS table.  \emph{Computational and constructive design theory},  67--134, Math. Appl., 368, Kluwer Acad. Publ., Dordrecht, 1996.

\bibitem{CKL}
C.J.~Colbourn, T.~Kl\o ve, and A.C.H.~Ling, Permutation
arrays for powerline communication and mutually orthogonal Latin squares.
{\em IEEE Trans. Inform. Theory} 50 (2004), 1289--1291.

\bibitem{DV}
M.~Deza and S.A.~Vanstone, Bounds for permutation arrays. {\em J.
Statist. Plann. Inference} 2 (1978), 197--209.

\bibitem{DFKW}
C.~Ding, F.-W.~Fu, T.~Kl\o ve and V.K.-W.Wei, Constructions of permutation arrays. 
\emph{IEEE Trans. Inform. Theory} 48 (2002), 977--980.

\bibitem{FD}
P.~Frankl and M.~Deza,
On the maximum number of permutations with given maximal or minimal 
distance. {\em J. Combin. Theory Ser.
A} 22 (1977), 352--360.

\bibitem{H}
S.~Huczynska, Powerline communication and the 36 officers problem. \emph{Philos. Trans. R. Soc. Lond. Ser. A Math. Phys. Eng. Sci.} 364 (2006), 3199--3214.

\bibitem{Ivt}
H.~Iwaniec, J. van de Lune and H.J.J.~ te Riele, The limits of Buchstab's iteration sieve. 
\emph{Nederl. Akad. Wetensch. Indag. Math.} 42 (1980), 409--417.

\bibitem{JLOS}
I. Janiszczak, W. Lempken, P.R.J. \"{O}sterg\aa rd and R. Staszewski, 
Permutation codes invariant under isometries, \emph{Des. Codes Cryptogr.} 75 (2015), 497--507. 

\bibitem{JS}
I.~Janiszczak and R.~Staszewski, An improved bound for permutation arrays of length 10. Preprint 4, 
Institute for Experimental Mathematics, University Duisburg-Essen, 2008.

\bibitem{JS2}
I.~Janiszczak and R.~Staszewski,
Isometry invariant permutation codes and mutually orthogonal Latin squares,
\url{https://arxiv.org/abs/1812.06886}.

\bibitem{K6}
T.~Kl\o ve, Classification of permutation codes of length 6 and minimum
distance 5. {\em Proc. Int. Symp. Information Theory Appl.}, 2000,
465--468.

\bibitem{MBS}
R.~Montemanni, J.~Barta, and D.H.~Smith, Permutation codes: a new
upper bound for M(7,5), Proc. of the 2014 Internat. Conf.
on Informatics and Advanced Computing (ICIAC), T. Yingthawornsuk
and O. Adiguzel eds., Internat. Academy of Engineers (IA-E), pages
1-3, 2014. 

\bibitem{SM}
D.H.~Smith and R.~Montemanni,
A new table of permutation codes. 
\emph{Des. Codes Cryptogr.} 63 (2012), 241--253.

\bibitem{WilsonMOLS}
R.M. Wilson, Concerning the number of mutually orthogonal Latin squares.
\emph{Discrete Math.}
9 (1974), 181-198.

\end{thebibliography}
\end{document}